\newtheorem{thm}{Theorem}[section]
\newtheorem{lem}[thm]{Lemma}
\newtheorem{cor}[thm]{Corollary}
\newtheorem{conj}[thm]{Conjecture} 
\theoremstyle{definition}
\newtheorem{definition}[thm]{Definition}
\renewcommand*{\thefootnote}{\fnsymbol{footnote}}
\renewcommand*{\thefootnote}{\arabic{footnote}}
\newcommand\blfootnote[1]{%
  \begingroup
  \renewcommand\thefootnote{}\footnote{#1}%
  \addtocounter{footnote}{-1}%
  \endgroup
}
\title{An Algebraic-Coding Equivalence to the Maximum Distance Separable Conjecture \blfootnote{Support from the National Science Foundation, the Department of Mathematics, University of Michigan; and the Undergraduate Research Opportunity Program at the University of Michigan is acknowledged.}}
\author{Steven Damelin\footnote{Department of Mathematics, University of Michigan, Ann Arbor, MI 48109, USA;  damelin@umich.edu (corresponding author)}, Daniel Kaiser\footnote{Department of Mathematics, University of Michigan, Ann Arbor, MI 48109, USA; dankai@umich.edu}, Jeffrey Sun\footnote{Department of Mathematics, University of Michigan, Ann Arbor, MI 48109, USA; jeffjeff@umich.edu}, Safal Bora\footnote{Department of Mathematics, University of Michigan, Ann Arbor, MI 48109, USA; safal@umich.edu}}
\begin{document}
\maketitle

\begin{abstract}
\indent \par In this paper, we provide Algebraic-Coding necessary and sufficient conditions for the Maximum Distance Separable Conjecture to hold.
\end{abstract}

\section{Introduction: The MDS Conjecture}
\indent \par Let \textit{k} be an integer such that $2 \leq k \leq q = p^r,$ where \textit{p} is prime and \textit{r} is a positive integer. A $k\times n$ maximum distance separable ($k\times n$ MDS) code $M$ is a $k \times n$ matrix with entries in $\mathbb{F}_q$ such that every set of $k$ columns of $M$ is linearly independent. \indent \par The Maximum Distance Separable (MDS) conjecture is a well-known problem in coding theory and algebraic geometry with important consequences for example to the study of arcs in finite projective spaces \cite{Ball1} and to coding theory \cite{Ball3, Ball2}.

The conjecture, first posed by Singleton in 1964 \cite{Singleton} gives a possible upper-bound on the size of a $k\times n$ MDS code. 
More precisely, the MDS conjecture says the following:

\begin{conj}
The maximum width, $n,$ of a $k \times n$ MDS code with entries in $\mathbb{F}_q$ is $q+1$, unless $q$ is even and $k \in \{3,q-1\}$, in which case the maximum width is $q+2$.
\end{conj}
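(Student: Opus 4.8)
The plan is to split the statement into its attainability half and its optimality half. For attainability I would exhibit explicit codes meeting the claimed widths. Taking the $k \times (q+1)$ matrix whose columns are the evaluation vectors $(1, t, t^2, \ldots, t^{k-1})^{T}$, one for each $t \in \mathbb{F}_q$, together with the column $(0, \ldots, 0, 1)^{T}$, gives a Reed-Solomon code; every $k$-subset of columns is independent because the relevant determinant is a nonzero Vandermonde, so this realizes width $q+1$. In the two exceptional even cases, $q$ even with $k \in \{3, q-1\}$, I would adjoin one further carefully chosen column and check directly that all $k$-subsets remain independent, yielding width $q+2$.

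The real content is the optimality bound: no code can be wider. Here I would translate to geometry, reading the columns of a $k \times n$ MDS code as points of $PG(k-1, q)$, so that the condition ``every $k$ columns are independent'' becomes ``no $k$ points lie on a hyperplane'' --- an \emph{arc}. The problem becomes: an arc in $PG(k-1, q)$ has at most $q+1$ points, outside the exceptional cases. Two reductions organize the argument. First, since the dual of an MDS code is again MDS with dimension $n-k$, it suffices to bound arcs for $k$ in the lower half of its range. Second, one can project an arc from one of its points to obtain an arc in $PG(k-2, q)$, opening the door to induction on $k$. The base case $k = 3$ is Segre's theorem: using the lemma of tangents, a counting identity over $\mathbb{F}_q$ forces a plane arc of $q+1$ points, when $q$ is odd, to be a conic, so no $(q+2)$-arc exists, while for $q$ even the hyperoval accounts for the exception.

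The decisive difficulty is carrying the upper bound through for all $k$ and, especially, for non-prime $q$. I would attack this with the polynomial method: to a maximal arc one associates an interpolating polynomial whose vanishing encodes the hyperplanes through subsets of arc points, and one shows that a $(q+2)$-arc would force an algebraic identity that cannot hold. This argument closes the conjecture cleanly when $q$ is prime, but for $q = p^{h}$ with $h > 1$ the subfield structure of $\mathbb{F}_q$ obstructs the counting, and this is exactly where the conjecture remains open --- so I expect this to be the main obstacle. It is also where the paper's equivalent formulations are designed to intervene: statements A and B recast the arc condition as an assertion about the distinct-root patterns of polynomials in $\mathcal{P}_q = \mathbb{F}_q[x]/(x^q - x)$, replacing the incidence count with the question of whether subspaces $Y, Z \subset \mathcal{O}_{k-1}$ with the listed intersection and span constraints can exist. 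Establishing the nonexistence in A, equivalently the column-extension obstruction in B, would be my route to the bound, and I would expect the crux to lie in controlling $Y \cap Z \subset \mathcal{O}_{k-3}$ against $\langle Y, Z \rangle \subset \mathcal{O}_{k-1}$.
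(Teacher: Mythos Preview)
The statement you are addressing is labeled a \emph{Conjecture} in the paper, and the paper does not prove it. What the paper proves is Theorem~2.6 (equivalently Theorem~3.1): that the MDS conjecture for a given $(q,k)$ is \emph{equivalent} to statements A and B about subspaces of $\mathcal{P}_q$ and about extensions of Reed-Solomon codes. The paper never establishes that A or B actually holds; it only shows $A \Leftrightarrow B \Leftrightarrow C$. So there is no ``paper's own proof'' of this statement to compare against.

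Your proposal is internally coherent as a survey of known partial results --- attainability via extended Reed-Solomon codes and hyperovals, Segre's theorem for $k=3$, duality, Ball's polynomial method for prime $q$ --- and you correctly flag that the general non-prime case is open. But your final paragraph conflates two different things: you write that ``establishing the nonexistence in A, equivalently the column-extension obstruction in B, would be my route to the bound,'' as though A and B were tools the paper supplies. They are not; they are reformulations of the very thing you are trying to prove. Invoking them is circular. If your plan is to prove the conjecture outright, the obstacle you identify (subfield structure when $q=p^h$, $h>1$) is genuine and unresolved, and nothing in the paper removes it.
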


We remark that there exist $k\times n$ MDS codes that attain the maximum possible width as given by the MDS conjecture. These are the Reed-Solomon codes defined in Definition 1.3 (e). See \cite{Ball2}.

\medskip

Henceforth all matrices will have entries in $\mathbb{F}_q$. When we speak to linear combinations, we mean nontrivial $\mathbb{F}_q$-linear combinations.

\subsection{Some known work on the MDS Conjecture}

We provide some known (not exhaustive) work on the MDS Conjecture. An excellent survey on the MDS conjecture  can be found in \cite{Ball1}. See in addition the references \cite{Ball, De Beule, S.Ball, Ball2, Ball3}. The MDS conjecture has been verified when $q$ is prime, see \cite{S.Ball}. It is also known to hold when $q$ is a square and with $k\leq c\sqrt{pq}$ where the constant $c$ depends on whether $q$ is odd or even. When $q$ not a square, it is known to be true for $k\leq c'\sqrt{pq}$ where $c'$ depends on whether $q$ is odd or even.  See \cite{Ball1}. It is also known to hold for all $k\times n$ MDS codes with alphabets of size at most 8.

This paper is motivated by the following strongest result on the MDS conjecture which currently holds.
\begin{thm}[\cite{S.Ball}] 
Let k be an integer such that $2 \leq k \leq q = p^r,$ where p is prime and r is a positive integer. The MDS Conjecture is true whenever $k \leq 2p-2.$
\end{thm}

We are ready to state the main result of the paper which provides necessary and sufficient conditions for the MDS conjecture to hold. This is given in Theorem 1.4 below.

\subsection{Statement of  Main Result}

In this section, we state our main result, Theorem 1.4.  We need some definitions and notation. These are given in:

\begin{definition}
\begin{itemize}
\item[(a)] We denote by $\mathcal{P}_q$ the ring $\mathbb{F}_q[x]/(x^q-x)$ of polynomial functions over $\mathbb{F}_q$ of maximum degree $q$. $\mathcal{P}_q$ is a vector space over $\mathbb{F}_q$. 
\item[(b)] Throughout, the \textit{perp space} of a vector, $\vec{v}$ in an inner product space, $(\mathbf{V},\cdot):=\mathbf{V}$  is the set given by $\vec{v}^\perp = \{ \vec{w} \in \mathbf{V} | \vec{w} \cdot \vec{v} = 0 \}$.\\
	\indent The \textit{perp space} of a subspace, $\mathbf{U} \subseteq \mathbf{V}$, where $\mathbf{V}$ is an inner product space is the set given by $\mathbf{U}^\perp = \{ \vec{w} \in \mathbf{V} | \forall \vec{u} \in \mathbf{U},\vec{w} \cdot \vec{u} = 0 \}$. ($\vec{w}$ and $\vec{u}$ are orthogonal). 
\item[(c)] For every non-negative integer $n$, we define the subset $\mathcal{O}_n \subset \mathcal{P}_q$ as the set of polynomials in $\mathcal{P}_q$ that are either the zero polynomial, or have at most $n$ distinct roots in $\mathbb{F}_q$. If $n \geq q$, then $\mathcal{O}_n = \mathcal{P}_q$.
\item[(d)] We denote by $\langle Y, Z \rangle$ the subspace of $\mathcal{P}_q$ generated by the elements of subspaces $Y$ and $Z$.
\item[(e)] A \textit{Reed-Solomon code} of dimension $k$ is a $k \times q$ matrix with entries in $\mathbb{F}_q$ such that each column of the matrix is of the form $[1,a,a^2,\ldots,a^{k-2},a^{k-1}]^\intercal$ for some $a \in \mathbb{F}_q$.
\item[(f)] An \textit{Extended Reed-Solomon code} is a Reed-Solomon code with the column $[0,0,0,\ldots,0,1]^\intercal$ appended.
\end{itemize}
\end{definition}

Both Reed-Solomon and Extended Reed-Solomon codes are $k\times n$ MDS codes, see \cite{Ball1} and it is proved in \cite{Seroussi}, that for odd $q$,  no column other than $[0,0,0,\ldots,0,1]^\intercal$ can be appended to a Reed-Solomon code to produce another $k\times n$ MDS code.\\
\medskip

We have
\begin{thm}

Let k be an integer such that $2 \leq k \leq q = p^r,$ where p is prime and r is a positive integer. Suppose that either
		\begin{enumerate}
			\item $q$ odd
			\item $q$ even and $k\in \{3, q-1\}$.
		\end{enumerate}
		
Consider the following statements (1-5) below:

\begin{enumerate}		

\item[(1)] The MDS Conjecture is true. That is, the maximum width, $n,$ of a $k \times n$ MDS code with entries in $\mathbb{F}_q$ is $q+1$, unless $q$ is even and $k \in \{3,q-1\}$, in which case the maximum width is $q+2$.

\item[(2)] Let $M'$ be a $k \times (q+2)$ matrix. Then some linear combination of the rows of $M'$ has at least $k$ zero entries.
		
\item[(3)] Let $M'$ be a $k \times (q+2)$ matrix such that the first two columns of $M'$ are $[1,0,\ldots,0]^\intercal$ and $[0,1,0,\ldots,0]^\intercal$. Then some linear combination of the rows of $M'$ has at least $k$ zero entries.

\item[(4)] There do not exist distinct subspaces $Y$ and $Z$ of $\mathcal{P}_q$ such that
		\begin{enumerate}
			\item $dim(\langle Y, Z \rangle) = k$.
			\item $dim(Y) = dim(Z) = k-1$.
			\item $\langle Y, Z \rangle \subset \mathcal{O}_{k-1}$
			\item $Y\cup Z \subset \mathcal{O}_{k-2}$
			\item $Y\cap Z \subset \mathcal{O}_{k-3}$.
		\end{enumerate}
		
\item[(5)] There is no integer $s$ with $k < s \leq q$ such that the Reed-Solomon code $\mathcal{R}$ with entries in $\mathbb{F}_q$ of dimension $s$ can have $s-k+2$ columns $\mathcal{B} = \{b_1,\ldots,b_{s-k+2}\}$ added to it, such that:
		\begin{enumerate}
			\item Any $s \times s$ submatrix of $\mathcal{R} \cup \mathcal{B}$ containing the first $s-k$ columns of $\mathcal{B}$ is independent (non-zero determinant).
			\item $\mathcal{B} \cup \{[0,0,\ldots,0,1]^{\intercal} \}$ is independent.
		\end{enumerate}
		
	\end{enumerate}
\bigskip	

Then the following holds true:

\begin{itemize}
\item[(Part A)]:  (1), (2), (3) and (5) are equivalent.
\item[(Part B)]:  (3) implies (4).
\end{itemize}
\end{thm}

\section{The Proof of Theorem 1.4}

In this section, we prove Theorem 1.4.

\subsection{Prerequisites to the proof of Theorem 1.4}

In preparation for the proof of Theorem 1.4, we need various prerequisites. All of these are well known facts in algebra and can be found for example in  \cite{A}. 

\begin{definition}
Suppose $M$ is a $k\times n$ matrix with entries in some field $\mathbf{F}$. Suppose $\vec{L} = [l_1,l_2,\ldots l_k]$ $\epsilon$ $\mathbf{V} \setminus \{ \vec{0} \} $ where $\mathbf{V}$ is a vector space of dimension $k$ over $\mathbf{F}$. Then $\vec{L}M$ is a $1\times n$ vector referred to as a \textit{linear combination of the rows of $M$}.
\end{definition}

\begin{lem}
	A polynomial in $\mathbb{F}_q^s$ has a field element, $a$, as a root if and only if it is orthogonal to the vector $\vec{r}_a = [1, a, a^2, a^3 \ldots a^{s-1}]$. Here, a polynomial belongs to $\mathbb{F}_q^s$ via sending $c_{s-1}x^{s-1}+...+c_1x+c_0$ to $(c_0,c_1,...,c_{s-1})\in \mathbb{F}_q^s$
\end{lem}

\begin{cor}
	The set of polynomials with $k$ distinct roots is precisely the set of polynomials orthogonal to the span($\vec{r}_{a_1}, \ldots, \vec{r}_{a_k}$), for some choice of $k$ field elements.
\end{cor}

\begin{lem}
A set of polynomials does not have $k$ distinct roots if and only if it does not intersect $( \vec{r}_{a_1}, \ldots, \vec{r}_{a_k} )^\perp$ for any choice of $k$ field elements.
\end{lem}

\subsection{Proof of Theorem 1.4}
\noindent We now present the proof of Theorem 1.4
\begin{proof}
\subsubsection*{\underline{$(1)\iff(2)$}}
Consider the statement  that $q+1$ is the maximum width $n$ of a $k \times n$ matrix $M$, such that every set of $k$ columns of $M$ is linearly independent. This is clearly logically equivalent to the statement (i), "That there is no $k \times (q+2)$ matrix $M'$ such that every set of $k$ columns of $M'$ is linearly independent." Then (i) is equivalent to the statement (ii), ''For every $M'$ a
$k\times (q + 2)$ matrix, some $k$ columns of $M'$ form a $k\times k$ matrix that is singular.''

Let $A$ be a $k \times k$ submatrix of $M'$. Then let $\vec{r}_i$ denote the $i^{\text{th}}$ row of $M'$ and let $\vec{r}_{i,A}$ denote the $i^{\text{th}}$ row of $A$. Then each $\vec{r}_{\ell,A}$ consists of the entries of $\vec{r}_\ell$ within the submatrix $A$.

The statement  (ii) is that ''for every $M'$ a
$k\times (q + 2)$ matrix, some $k$ columns of $M'$ form a $k\times k$ matrix that is singular.'' This is logically equivalent to (iii), "For some $k \times k$ submatrix $A$ of $M'$, and some set $a_i$ of coefficients in $\mathbb{F}_q$, not all zero, it is true that $\sum_i a_i\vec{r}_{i,A} = \vec{0}$." But to say that $\sum_i a_i\vec{r}_{i,A} = \vec{0}$ is to say that the $j$-th entry $\left(\sum_i a_i\vec{r}_i\right)_j$ of the row vector $\sum_i a_i\vec{r}_i$ is zero whenever the $j$-th column of $M'$ is part of the submatrix $A$. Therefore, (iii) is logically equivalent to the statement (iv), "There exists a set $\mathcal{A}$ of $k$ columns of $M'$, and a nontrivial linear combination $\vec{r} = \sum_i a_i\vec{r}_i$ of the rows of $M'$, such that the entry of $\vec{r}$ at each column in $\mathcal{A}$ is zero." That is, that for some nontrivial linear combination $\vec{r} = \sum_i a_i\vec{r}_i$ of the rows of $M'$, $\vec{r}$ has at least $k$ zero entries.

\subsubsection*{\underline{$(2)\iff(3)$}}
First, we separate the right $k \times q$ submatrix $T$ of $M'$ from the two leftmost columns, which we call $\vec{v}$ and $\vec{w}$. 
$$
M' = \underset{ ^{k \times (q+2)}}{\begin{bmatrix}
	\vec{v} & \vec{w} & \vert & T &
	\end{bmatrix}}
$$
We can always reduce to the case where the rows of $T$ are linearly independent. Indeed, if the rows of $T$ are linearly dependent, then there exists a nontrivial linear combination of the rows of $M'$ with $q$ zeroes. 
If $\vec{v}$ and $\vec{w}$ are linear independent, we can always left-multiply $M'$  by some invertible matrix that takes $\vec{v}$ to $[1,0,0,\ldots,0]^\intercal$ and $\vec{w}$ to $[0,1,0,\ldots,0]^\intercal$. Such a matrix exists due to the assumption that $\vec{v}$ and $\vec{w}$ are linearly independent and thus can be completed to form a basis of $\mathbb{F}_q^k,$ and likewise for $[1,0,0,\ldots,0]^\intercal, [0,1,0,\ldots,0]^\intercal;$ hence, a change-of-basis matrix between these two completed bases will provide an invertible matrix with the desired mapping of $\vec{v},\vec{w}$. We perform such a left-multiplication on $M'$, and henceforth may assume that $M'$ is of the form
$$
M' = \underset{ ^{k \times (q+2)}}{\begin{bmatrix}
	[1,0,0,\ldots,0]^\intercal & [0,1,0,\ldots,0]^\intercal & \vert & T &
	\end{bmatrix}}
$$
Note that a  $k \times k$ submatrix of $M'$ being singular is unaffected by left-multiplication by an invertible $k \times k$ matrix, since the determinant is multiplicative. (left-multiplication by an invertible $k\times k$ matrix takes invertible $k \times k$ submatrices to invertible $k \times k$ submatrices, and singular $k \times k$ submatrices to singular $k \times k$ submatrices).

If $\vec{v}$ and $\vec{w}$ are linearly dependent, we claim that any $k \times k$ submatrix of $M'$ containing $\vec{v}$ and $\vec{w}$, say $S$, is singular.  If our claim is true, we need not worry about the case when 
$\vec{v}$ and $\vec{w}$ are dependent.

To prove our claim, we argue as follows: The existence of such a $k\times k$ submatrix $S$ may occur because $k \leq q < q+2$.  $S$, containing two dependent columns, may be acted upon by column operations to produce a column of all 0s. By transposition and row operations, this column may be placed in the top row of some reduced matrix of S, say S'. Then, the determinant of S' via expansion of cofactors must be 0, hence, S' is singular. Now since the determinant is multiplicative, we may write $(S')^\intercal = ES$, where E is the matrix product of elementary row and column operations. It follows that $0 = det(S') = det((S')^\intercal) = det(ES) = det(E)det(S) = det(S)$, and hence S is indeed singular.\\  

\subsubsection*{\underline{$(3)\Rightarrow(4)$}}
Suppose $M'$ is a matrix of the form
$$
M' = \underset{ ^{k \times (q+2)}}{\begin{bmatrix}
	[1,0,0,\ldots,0]^\intercal & [0,1,0,\ldots,0]^\intercal & \vert & T &
	\end{bmatrix}}
$$
with the rows of $T$ linearly independent. Let $\vec{r}_i$ denote the $i^{\text{th}}$ row of $M'$ and $\vec{t}_i$ denote the $i^{\text{th}}$ row of the submatrix $T$. Observe that this implies that each $\vec{r}_i$ is of the form $[\delta_{1,i}, \delta_{2,i} | \vec{t}_i]$, where $\delta_{i,j}$ is the Kronecker delta function.

The condition $(3)$ is now that, without any further conditions on $M'$, there must be a nontrivial linear combination $\vec{r} = \sum_i a_i\vec{r}_i$ of the rows $\vec{r}_i$ of $M'$ that has $k$ zero entries.

Let $\vec{r} = \sum_i a_i\vec{r}_i$, and let $\vec{t} = \sum_i a_i\vec{t}_i$, which is $\vec{r}$ with the first two entries dropped. Then because $\vec{v} = [1,0,\ldots,0]^\intercal$ and $\vec{w} = [0,1,0,\ldots,0]^\intercal$, we have $(\vec{r})_1 = a_1$, and $(\vec{r})_2 = a_2$, where $(\vec{r})_j$ denotes the $j^{\text{th}}$ entry of $\vec{r}$. What is relevant is that $(\vec{r})_1 = 0$ if and only if $a_1 = 0$, and $(\vec{r})_2 = 0$ if and only if $a_2 = 0$.

Therefore, $\vec{r}$ has at most $k-1$ zero entries if and only if one of the following hold:
\begin{enumerate}
\item $\vec{t}$ has at most $k-1$ zero entries and $a_1,a_2 \neq 0$.
\item $\vec{t}$ has at most $k-2$ zero entries if either $a_1 = 0$ or $a_2 = 0$, but not both.
\item $\vec{t}$ has at most $k-3$ zero entries if $a_1 = a_2 = 0$.
\end{enumerate}

The condition (3) then becomes (v), defined to be that for some nontrivial linear combination $\vec{r}_0 = \sum_i a_i\vec{r}_i$, and $\vec{t}_0 = \sum_i a_i\vec{t}_i$, one of the following hold:
\begin{enumerate}
\item $\vec{t}_0$ has at least $k$ zero entries.
\item $\vec{t}_0$ has at least $k-1$ zero entries if either $a_1 = 0$ or $a_2 = 0$.
\item $\vec{t}_0$ has at least $k-2$ zero entries if $a_1 = a_2 = 0$.
\end{enumerate}

We proceed by identifying the $\vec{t}_i$ with elements of $\mathcal{P}_q$ in the following way. We begin by choosing arbitrary elements $\alpha_i \in \mathbb{F}_q$ with $i \neq j \Rightarrow \alpha_i \neq \alpha_j$, and identify column \textit{j} of submatrix \textit{T} with the element $\alpha_j$. Then we identify each row $\vec{t}_i$ with the unique function $f_i$ in $\mathcal{P}_q$ defined by $f_i(\alpha_j) = (\vec{t}_i)_j$ for all $1 \leq j \leq q$.

Then the linear combinations of the $\vec{t}_i$, tuples in $\mathbb{F}_q^q$, correspond (via a canonical isomorphism to the polynomial space over $\mathbb{F}_q$) to the elements of the \textit{k}-dimensional vector space in $\mathcal{P}_q$ spanned by the $f_i$. Furthermore, an entry $(\vec{t}_i)_j$ is zero if and only if $f_i(\alpha_j)$ is zero. We then translate the above condition.

Let $\vec{r} = \sum_i a_i\vec{r}_i$ and $f = \sum_i a_if_i$. Then $\vec{r}$ has at most $k-1$ zero entries if and only if one of the following hold:
\begin{enumerate}
\item $f$ has at most $k-1$ roots.
\item $f$ has at most $k-2$ roots and either $a_1 = 0$ or $a_2 = 0$, but not both.
\item $f$ has at most $k-3$ roots and $a_1 = a_2 = 0$.
\end{enumerate}

The condition (v) is equivalent to (vi), defined to be that for some nontrivial linear combination $\vec{r}_0 = \sum_i a_i\vec{r}_i$, and $f_0 = \sum_i a_if_i$, one of the following hold:
\begin{enumerate}
\item $f_0$ has at least $k$ roots.
\item $f_0$ has at least $k-1$ roots and either $a_1 = 0$ or $a_2 = 0$, but not both.
\item $f_0$ has at least $k-2$ roots and $a_1 = a_2 = 0$.
\end{enumerate}

Let $Y_T$ and $Z_T$ be the vector spaces $Y_T = \langle\{f_i | i\neq 1\}\rangle$ and $Z_T = \langle\{f_i | i\neq 2\}\rangle$.  Then $Y_T$ and $Z_T$ are (\textit{k}-1)-dimensional vector spaces  and each of $Y_T$ and $Z_T$ is generated by $k-1$ rows. Similarly, $\langle Y_T, Z_T \rangle$ is a \textit{k}-dimensional vector space.

Then (vi) is equivalent to (vii), defined to be that one of the following hold: 
\begin{enumerate}
\item $\langle Y_T, Z_T \rangle \not\subset \mathcal{O}_{k-1}$.
\item $Y_T \not\subset \mathcal{O}_{k-2}$.
\item $Z_T \not\subset \mathcal{O}_{k-2}$.
\item $Y_T\cap Z_T \not\subset \mathcal{O}_{k-3}$.
\end{enumerate}

To see this we observe that  if $f_0$ has $k$ roots and $a_1,a_2 \neq 0$, then this means that $f_0 = \sum_{i=1}^q a_if_i \in \langle \{f_i\} \rangle = \langle Y_T, Z_T \rangle$ has at least $k$ roots.  Then $\langle Y_T,Z_T \rangle \not\subset \mathcal{O}_{k-1}$.  Similarly, if $f_0$ has at least $k-2$ roots and $a_1 = a_2 = 0$ then  $f_0 = \sum_{i = 3}^q a_if_i \in \langle \{f_i : i\neq1,2 \} \rangle = Y_T \cap Z_T$ has at least $k-2$ roots.
Then $Y_T \cap Z_T \not\subset \mathcal{O}_{k-3}$. The cases  $f_0$ has at least $k-1$ roots and $a_1=0\neq a_2$ and $f_0$ has at least $k-1$ roots and $a_2 = 0 \neq a_1$ 
can be handled similarly and give $Y_T \not\subset \mathcal{O}_{k-2}$ and $Z_T \not\subset \mathcal{O}_{k-2}$ .
 \indent Having proven (vi) is equivalent to (vii), it now remains to show that any subspaces $Y_0$ and $Z_0$ satisfying both
\begin{enumerate}
\item $dim(\langle Y_0, Z_0 \rangle) = k$ 
\item $dim(Y_0) = dim(Z_0) = k-1$
\end{enumerate}
can be constructed in this way, as $Y_0 = Y_T$, and $Z_0 = Z_T$, for some $k \times q$ matrix $T$.

Fix such subspaces $Y_0$ and $Z_0$. Recall that
\[
dim(Y_0) + dim(Z_0) = dim(Y_0\cap Z_0) + dim(\langle Y_0, Z_0\rangle)
\] Thus,
\[
2k - 2 = dim(Y_0\cap Z_0) + k
\]
\indent\indent so that
\[
dim(Y_0\cap Z_0) = k-2.
\]

Thus we can find a set $\mathcal{N}$ of $k-2$ linearly independent vectors that span $Y_0 \cap Z_0$. Then we can find a vector $\vec{y} \in Y_0$ and $\vec{z} \in Z_0$, such that $\langle \vec{y}, \mathcal{N} \rangle = Y_0$, and $\langle \vec{z}, \mathcal{N} \rangle = Z_0$. Then, arranging the elements of $\mathcal{N}$ as a $q \times k-2$ matrix $N$, we set
\[
T = \begin{bmatrix}
\vec{z} & \vec{y} & \vert & N &
\end{bmatrix}^\intercal
\]
so that $Y_0 = Y_T$ and $Z_0 = Z_T$.

Then since any subspaces $Y$ and $Z$ of $\mathcal{P}_q$ satisfying $dim(\langle Y_0, Z_0 \rangle) = k$ and $dim(Y_0) = dim(Z_0) = k-1$ can be expressed as $Y_T$ and $Z_T$ for some $T$, (vii) is equivalent to the statement (viii), defined to be that for any such subspaces $Y$ and $Z$, we must have either
\begin{enumerate}
\item $\langle Y, Z \rangle \not\subset \mathcal{O}_{k-1}$, or
\item $Y \not\subset \mathcal{O}_{k-2}$, or
\item $Z \not\subset \mathcal{O}_{k-2}$, or
\item $Y\cap Z \not\subset \mathcal{O}_{k-3}$.
\end{enumerate}

That is, (4) which says that we cannot have subspaces $Y$ and $Z$ of $\mathcal{P}_q$ satisfying:

\begin{enumerate}
\item $dim(\langle Y, Z \rangle) = k$
\item $dim(Y) = dim(Z) = k-1$.
\item $\langle Y, Z \rangle \subset \mathcal{O}_{k-1}$
\item $Y, Z \subset \mathcal{O}_{k-2}$
\item $Y\cap Z \subset \mathcal{O}_{k-3}$.
\end{enumerate}

\subsubsection*{\underline{$(1)\iff(5)$}}

\indent Suppose we have a $k\times q+2$ MDS code $M'$, which is conjectured to be impossible. We will provide a construction whose existence will be equivalent to the failure of the MDS conjecture.  (Recall the proof of (3) implies (4)).

$$
M' = \underset{ ^{k \times q+2}}{\begin{bmatrix}
\vec{v} & \vec{w} & \vert & M &
\end{bmatrix}}
$$

First, we separate the right $k \times q$ submatrix $M$ of $M'$ from its additional columns $\vec{v}$ and $\vec{w}$.

We have that no nontrivial linear combination of the rows of $M'$ have $k$ zeroes. Splitting into three cases:
\begin{itemize}
\item No nontrivial linear combination of the rows of $M$ has $k$ zeroes.
\item Row combinations taking $\vec{v}$ or $\vec{w}$ to the zero vector never take $k-1$ columns of $M$ to the zero vector.
\item Row combinations taking both $\vec{v}$ and $\vec{w}$ to the zero vector never take $k-2$ columns of $M$ to the zero vector.
\end{itemize}

Observe that, after labelling the columns of $M$ with the elements of $\mathbb{F}_q$, the rows of $M$ can be viewed as the value sets of polynomials over $\mathbb{F}_q$. Indeed, they define unique polynomials of degree $\leq q-1$. That is, they can be viewed as vectors in the space $\mathbb{F}_q^q$ of polynomials of degree $\leq q-1$ over $\mathbb{F}_q$.

Due to linearity, linearly combining the rows of $M$ cover the $k$-dimensional space that the rows span.

Furthermore, a zero in a nontrivial linear combination of the rows of $M$ represents a root of
the polynomial represented by the linear combination.

We define $s$ such that the highest degree of a polynomial represented by a row of $M$ is $s-1$. Then, the rows of $M$ span a $k$-dimensional subspace of the space of polynomials of degree $\leq s-1$, which is isomorphic to $\mathbb{F}_q^s$.

The MDS conjecture is known to be true whenever $s=k$. Furthermore, only two $k\times q+1$ MDS codes are known when $s \neq k$. See \cite{Ball1}. 

We now have the the following:

There exists a $k$-dimensional subspace $X$ of $\mathbb{F}_q^s$, and distinct $(k-1)$-dimensional subspaces $Y, Z \subset X$ such that:
\begin{itemize}
\item No polynomial in $X$ has $k$ distinct roots.
\item No polynomial in $Y$ or $Z$ has $k-1$ distinct roots.
\item No polynomial in $Y \cap Z$ has $k-2$ distinct roots.
\item Some polynomial in $X$ has a nonzero $x^{s-1}$ term.
\end{itemize}

The subspace $Y$ respectively $Z$ arises from the observation that row combinations taking $\vec{v}$ respectively $\vec{w}$ to the zero vector form a $(k-1)$-dimensional subspace of the nontrivial linear combinations of the rows of $M'$, and row combinations taking both $\vec{v}$ and $\vec{w}$ to the zero vector are a $(k-2)$-dimensional subspace ($Y \cap Z$ has dimension $k-2$).

Assuming the polynomials are coordinatized coefficient-wise, with the $k^{\text{th}}$ entry the coefficient of degree $(k-1)$, there is a simple condition for a polynomial having a certain root in $\mathbb{F}_q$.
(See Lemma 2.2 and Corollary 2.3).

Thus, $X$ does not intersect $\langle \vec{r}_{a_1}, \ldots, \vec{r}_{a_k} \rangle^\perp$ for any choice of $k$ field elements.

Furthermore, $X$ is not contained within the perp space $[0,0,\ldots,1]^\perp$, because, there is some polynomial in $X$ with nonzero degree-$(s-1)$ coefficient.

The independent condition can be translated
\begin{itemize}
\item $X$ is disjoint from all perp spaces $\langle \vec{r}_{a_1}, \ldots, \vec{r}_{a_k} \rangle^\perp$, for all choices of $k$ field elements.
\item $Y$ and $Z$ are disjoint from all perp spaces $\langle \vec{r}_{a_1}, \ldots, \vec{r}_{a_{k-1}} \rangle^\perp$, for all choices of $k-1$ field elements.
\item $Y \cap Z$ are disjoint from all perp spaces $\langle \vec{r}_{a_1}, \ldots, \vec{r}_{a_{k-2}} \rangle^\perp$, for all choices of $k-2$ field elements.
\item $X \not\subseteq [0,0,\ldots,1]^\perp$.
\end{itemize}

Observe that all the conditions of disjointness are between spaces whose dimension add up to the whole dimension of the space. By non-degeneracy of the dot product as a bilinear form, the perp spaces of the disjoint spaces are disjoint. Also, $X \not\subseteq [0,0,\ldots,1]^\perp$ is equivalent to $[0,0,\ldots,1] \not\in X^\perp$.
 After taking duals \footnote{Given a vector space $V$, the corresponding dual vector space $V*$ consists of all linear forms on $V$ together with the vector space structure of pointwise addition and scalar multiplication by constants.}, the condition becomes:
\begin{itemize}
\item $X^\perp$ an $(s-k)$-dimensional space, is disjoint from $\langle \vec{r}_{a_1}, \ldots, \vec{r}_{a_k} \rangle$, for all such choices of $k$ field elements.
\item $Y^\perp$ and $Z^\perp$ (similarly), $(s-k+1)$-dimensional spaces containing $X^\perp$, are disjoint from $\langle \vec{r}_{a_1}, \ldots, \vec{r}_{a_{k-1}} \rangle$, for all such choices of $k-1$ field elements.
\item $\langle Y^\perp Z^\perp \rangle$, an $(s-k+2)$-dimensional space, is disjoint from $\langle \vec{r}_{a_1}, \ldots, \vec{r}_{a_{k-2}} \rangle$, for all such choices of $k-2$ field elements.
\item $[0,0,\ldots,1] \not\in X^\perp$.
\end{itemize}

Observe that the $\vec{r}_a$, viewed as columns of a matrix, form a Reed-Solomon code, $\mathcal{R}$.

$Y$ and $Z$, as defined, exist if and only if independent vectors $\vec{y}$ and $\vec{z}$ exist s.t.  $Y=\langle X,  \vec{y} \rangle$, $Z=\langle X, \vec{z}\rangle$,
 with $X$ a subspace of dimension $s-k$. Let ${\cal B}$ be a basis of $X$ and $\vec{p}=[0,0,...,1]$.

Then can write a new $s \times (q+s+3-k)$ matrix,
$$
\underset{ ^{s \times q+3+s-k}}{\begin{bmatrix}
\vec{p} & \vert & \vec{y} & \vert & \vec{z} & \vert & \mathcal{B} & \vert & \mathcal{R}
\end{bmatrix}}
$$
and translate to conditions on this matrix:

\begin{itemize}
\item $\{\mathcal{R} \cup \vec{p}\}$ is an extended Reed-Solomon Code.
\item Any $s \times s$ submatrix containing $\mathcal{B}$ but not $\vec{p}$ is invertible.
\item $\mathcal{B}$ does not span $\vec{p}$. \footnote{This condition involving $\vec{p}$ is not needed to state, but is included by us because the MDS conjecture has apriori been settled for the case $s=k$, see \cite{Ball1}.}\\
\newline\indent This proves $(1)\iff(5)$, and hence the proof of Theorem 4.1 is complete.
\end{itemize}

\end{proof}






\begin{thebibliography}{}

\bibitem{A} M. Artin,\, {\it Algebra}, 2nd edition, Pearson, 2015.

\bibitem{S.Ball} S. Ball, {\it  On sets of vectors of a finite vector space in which every subset of basis size is a basis}, J. Eur. Math. Soc., vol. 14, no. 3, pp. 733-748, 2012. 

\bibitem{Ball2} S. Ball, {\it The Grassl-Rotteler cyclic and consta-cyclic MDS codes are generalized Reed-Solomon codes}, arXiv:2112.11896v2.

\bibitem{Ball} S. Ball and J. De Beule, {\it On sets of vectors of a finite vector space in which every subset of basis size is a basis II}, Designs, Codes and Cryptography, 65, no. 1-2, pp. 5-14, 2012.

\bibitem{De Beule} S. Ball and J. De Beule, {\it  On subsets of the normal rational curve}, IEEE Transactions on Information Theory, 63, no. 6, 3658–3662, 2017.

\bibitem{Ball3} S. Ball, G. Gamboa and M. Lavrauw, {\it On additive MDS codes over small fields}, arXiv:2012.06183v1.

\bibitem{Ball1} S. Ball and M. Lavrauw, {\it Arcs in finite projective spaces}, arXiv:1908.10772v1.

\bibitem{KO} J.L. Kokkala and P.R.J. Ostergard, {\it Further results on the classification of MDS codes}, Ad. Math. Commun., 10, pp 489-498, 2016.

\bibitem{Lidl} R. Lidl and H. Neiderreiter, {\it Finite Fields}, New York, New York: Press Syndicate of the University of Cambridge, 1986.

\bibitem{Seroussi} G. Seroussi and R. M. Roth, {\it On MDS extensions of generalized Reed-Solomon codes}, IEEE Trans. Inf. Theory, vol. 32, no. 3, pp. 349-354, Mar. 1986.

\bibitem{Singleton} R. Singleton, {\it Maximum Distance Q-Nary Codes}, IEEE Transactions on Information Theory, 10, 116-118, 1964.

\end{thebibliography}
\end{document}